\documentclass[12pt]{article}

\usepackage{amsmath,amsthm,amssymb}
\newtheorem{thm}{Theorem}
\newtheorem{prop}[thm]{Proposition}

\theoremstyle{definition}

\newcommand{\Zed}{\mathbb{Z}}

\begin{document}
\title{Obstructing finite surgery}
\author{Margaret I. Doig}
\maketitle

\abstract{For a fixed $p$, there are only finitely many elliptic 3-manifolds given by $p/q$-surgery on a knot in $S^3$. We prove this result by using the Heegaard Floer correction terms (\emph{d}-invariants) to obstruct elliptic manifolds from arising as knot surgery.}

%We study finite, non-cyclic knot surgeries, that is, surgeries which give manifolds of finite but not cyclic fundamental group. These manifolds are known to be knot surgeries except for the dihedral manifolds. We show that, for a fixed $p$, there are finitely many dihedral manifolds that are $p/q$-surgery, and we place a bound on which manifolds they may be.}

\section{Introduction}
Consider $p/q$-framed Dehn surgery on a knot $K$ in $S^3$, written $S^3_{p/q}(K)$. Two fundamental questions immediately arise: Given a manifold $Y$, which knots $K$ (if any) have $Y$ as a surgery? Given a knot $K$, which manifolds $Y$ arise as surgery on $K$? 

We investigate in detail the knot surgeries giving elliptic manifolds (henceforth called \emph{finite} surgeries):
\begin{thm}\label{thm1}
Fix $p$ an integer. There are only finitely many elliptic $Y$ such that \[Y=S^3_{p/q}(K).\]
\end{thm}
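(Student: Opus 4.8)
The plan is, for a fixed integer $p$, to bound the set of elliptic manifolds that can be written as $S^3_{p/q}(K)$. We may assume $p>0$: the case $p=0$ is vacuous, since $S^3_0(K)$ has infinite first homology, and reversing orientation (which is a bijection on elliptic manifolds and carries $S^3_{p/q}(K)$ to $S^3_{-p/q}(\overline K)$) interchanges the realizable sets for $p$ and $-p$. Since $H_1(S^3_{p/q}(K))\cong\Zed/p$ when $p/q$ is in lowest terms, every elliptic $Y=S^3_{p/q}(K)$ has $|H_1(Y)|=p$. I would first invoke the classification of elliptic manifolds by fundamental group — lens spaces, prism manifolds, and the binary tetrahedral/octahedral/icosahedral families — and observe that for fixed $p$ there are only finitely many lens spaces $L(p,q)$ ($0<q<p$, $\gcd(p,q)=1$) and only finitely many tetrahedral, octahedral or icosahedral manifolds with $|H_1|=p$ (the order of $H_1$ is a fixed positive multiple of the auxiliary parameter $m$ in each such family, so $m$ is bounded). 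The only infinite families of elliptic manifolds with $|H_1|=p$ fixed are prism manifolds $\{M_n\}_{n}$ obtained by varying the Seifert invariant $n$ (for instance the quotients $S^3/Q_{4n}$ with $n$ odd, all of which have $H_1\cong\Zed/4$). So Theorem~\ref{thm1} reduces to showing that each such family contains only finitely many knot-surgery manifolds.

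For the core of the argument I would use the Ozsv\'ath--Szab\'o surgery formula for the correction terms. Given an elliptic $Y=S^3_{p/q}(K)$, after possibly replacing $Y$ by $-Y=S^3_{p/|q|}(\overline K)$ we may assume the surgery coefficient is positive, and then under a standard identification $\mathrm{Spin}^c\big(S^3_{p/q}(K)\big)\cong\{0,1,\dots,p-1\}$,
\[
 d\big(S^3_{p/q}(K),i\big)=d\big(L(p,q),i\big)-2\,V_{j_q(i)}(K),\qquad 0\le i\le p-1,
\]
where $(V_t(K))_{t\ge0}$ is the non-increasing, eventually zero sequence of surgery invariants of $K$, with $V_t-V_{t+1}\le 1$, and $j_q\colon\{0,\dots,p-1\}\to\Zed_{\ge0}$ is the associated tent-shaped function. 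Three facts combine to constrain the right-hand side: (i) $d(L(p,q),\cdot)$ depends only on $q\bmod p$, so up to relabeling it ranges over finitely many functions on $\{0,\dots,p-1\}$; (ii) $j_q$ takes values in $[0,\lfloor p/2\rfloor]$ and, over all admissible $q$, is one of only finitely many functions; (iii) since each index $j_q(i)$ lies in the bounded range $[0,j_{\max}]$ with $j_{\max}:=\max_i j_q(i)\le p/2$, and $(V_t)$ is non-increasing with unit steps, the tuple $\big(V_{j_q(i)}(K)\big)_i$ equals $V_{j_{\max}}(K)\cdot(1,\dots,1)$ plus one of finitely many fixed $\Zed_{\ge0}$-valued functions. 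Putting these together, $d(Y,\cdot)=f\pm 2b\cdot(1,\dots,1)$ for some $f$ in a finite set $\mathcal F_p$ of functions depending only on $p$ and some integer $b\ge0$; in particular the correction-term ``spread'' $\max_i d(Y,i)-\min_i d(Y,i)$ lies in the finite set $\{\max f-\min f:f\in\mathcal F_p\}$.

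The proof then finishes if, along each infinite prism family $\{M_n\}$ with $|H_1(M_n)|=p$, the correction-term spread is unbounded. Here I would compute $d(M_n,\cdot)$ from the negative-definite star-shaped plumbing bounding $M_n$ (a graph with a single bad vertex, so that $d$ is the maximum of $\tfrac14(c_1^2+\#V)$ over characteristic vectors) and show that as the long leg of the plumbing grows the minimum correction term tends to $-\infty$; alternatively, since $M_n$ is an $L$-space, $\sum_{\mathfrak s}d(M_n,\mathfrak s)$ equals, up to a term bounded in $|H_1(M_n)|$, a fixed multiple of $|H_1(M_n)|\,\lambda(M_n)$, and the Casson--Walker invariant $\lambda(M_n)\to\pm\infty$ along the family, forcing the $d$-invariants not to remain bounded. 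Either way $\max_i d(M_n,i)-\min_i d(M_n,i)\to\infty$, so only finitely many $M_n$ can have spread in the finite set produced above, and Theorem~\ref{thm1} follows. I expect the main obstacle to be precisely this last point — computing the correction terms of the prism manifolds sharply enough to prove divergence of the spread — together with making the classification bookkeeping (identifying which parameter in each family leaves $|H_1|$ fixed, and confirming no other family does) completely rigorous.
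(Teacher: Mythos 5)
Your overall architecture matches the paper's: reduce via the classification of elliptic manifolds to the one genuinely infinite family with $|H_1|=p$ (the dihedral/prism manifolds), then use Heegaard Floer correction terms to show only finitely many members of that family are knot surgeries. The reduction step is essentially identical (the paper organizes it via Seifert's classification, Moser's torus-knot surgeries, and the satellite classification, and additionally invokes Boyer--Zhang to force $q=1$ for hyperbolic knots --- a step your argument happily avoids, since your surgery-formula bound works for arbitrary $p/q$). Where you genuinely diverge is in the obstruction itself. The paper proves an \emph{absolute} two-sided bound $-4m+\tfrac{7}{4}\leq d(S^3_{4m}(K),i)\leq m-\tfrac14$ (Theorem~\ref{lemmadbounded}); to control the overall additive shift this requires the $L$-space hypothesis, via the genus bound $g(K)\leq 2m$ from \cite[Corollary 1.4]{ozszqsurgery} and the alternating-coefficient structure of Alexander polynomials of $L$-space knots (Proposition~\ref{propalex}). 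You instead bound only the \emph{spread} $\max_i d-\min_i d$, exploiting that $(V_t)$ is non-increasing with steps in $\{0,1\}$ so that the tuple $V_{j_q(i)}-V_{j_{\max}}$ is one of finitely many functions. This is a real simplification: it needs no $L$-space input, no genus bound, and no Alexander polynomial analysis, and it applies uniformly to all knots and all $q$.

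The one place you need to be careful is the final divergence step, and as stated your sketch does not quite deliver what your reduction demands. Having traded the absolute bound for a spread bound, you must show the \emph{spread} of $d(M_n,\cdot)$ diverges along the prism family, not merely that some $d(M_n,\sigma_n)$ or the sum $\sum_\sigma d(M_n,\sigma)$ is unbounded: if all $4m$ correction terms drifted to $-\infty$ together, the spread could stay bounded and your argument would say nothing. Both of your proposed computations (minimum of $d$ over the plumbing tends to $-\infty$; Casson--Walker divergence) only rule out uniform boundedness. The fix is exactly the content of the paper's Theorem~\ref{thmdihedrald}: under the step $n\mapsto n+m$, half of the correction terms of $Y_n$ are constant while the other half change by $\mp\tfrac14$, so the maximum stays bounded while the minimum escapes and the spread does diverge. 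So your route closes, but it requires the same kind of explicit correction-term computation for the prism family that the paper carries out (via N\'emethi's algorithm, or equivalently your one-bad-vertex plumbing formula); the Casson--Walker shortcut alone is insufficient.
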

We focus on the dihedral (prism) manifolds, because, for a fixed $p$, there are only finitely many non-dihedral elliptic manifolds with $H_1(Y)$ of order $p$. We focus on hyperbolic knots because the finite surgeries on non-hyperbolic knots are classified, and only finitely many elliptic spaces arise as surgery on non-hyperbolic knots, for a fixed $p$.

\begin{thm} \label{thm1b}
Fix $m>0$. Consider the dihedral manifolds
\[Y_n=\left(-1;(2,1),(2,1),(n,m)\right).\] 
If $Y_n$ is surgery on a hyperbolic knot in $S^3$, then \[-16m<n<16m.\] 
\end{thm}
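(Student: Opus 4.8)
The plan is to obstruct $Y_n$ with the Heegaard Floer correction terms, playing the rigidity of the $d$-invariants of a knot surgery against an explicit computation of the $d$-invariants of the prism manifold $Y_n$. First I would reduce to the L-space setting. An elliptic manifold is an L-space, and $|H_1(Y_n)| = |4m|$, so if $Y_n = S^3_{p/q}(K)$ then $p = \pm 4m$ and $K$ admits an L-space surgery. If $p/q < 0$ I pass to the mirror, using $d(-Y,\mathfrak s) = -d(Y,\mathfrak s)$ and the fact that $-Y_n$ is again an elliptic prism manifold with first homology of order $4m$; so I may assume $p/q > 0$. Then the genus bound for L-space surgeries gives $2g(K) - 1 \le p/q \le p = 4m$, whence $g(K)$ is bounded in terms of $m$; consequently the sequence $\{V_j(K)\}_{j\ge 0}$ that controls the surgery correction terms is non-increasing, vanishes for $j \ge g(K)$, and is bounded in every coordinate by a quantity depending only on $m$.

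Next I would invoke the rational surgery formula for correction terms: for $0 \le i \le p-1$,
\[
  d\bigl(S^3_{p/q}(K), i\bigr) = d\bigl(L(p,q), i\bigr) - 2\max\bigl\{V_{\lfloor i/q\rfloor},\, V_{\lceil (p-i)/q\rceil}\bigr\}.
\]
The structural consequence I want is this: the multiset $\{d(Y_n,\mathfrak s)\}_{\mathfrak s}$ must be obtainable from the $d$-invariants of some lens space $L(4m,q)$ with $\gcd(q,4m)=1$ by subtracting from each entry twice the value of one and the same non-increasing, finitely supported, non-negative integer sequence $\{V_j\}$. There are only finitely many admissible $q$, and for each only finitely many identifications of $\mathrm{Spin}^c$ structures to test.

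Then I would compute the correction terms of $Y_n$ directly from its Seifert data. One of $\pm Y_n$ bounds a negative-definite star-shaped plumbing (after a Neumann move to clear the central $-1$) whose three legs are the negative continued fraction expansions of $2/1$, $2/1$, and $n/m$; the Ozsv\'{a}th--Szab\'{o} algorithm for such plumbings then yields each $d(Y_n,\mathfrak s)$ as an explicit function of $n$ and $m$. The point to extract is that as $|n|$ grows the values $d(Y_n,\cdot)$ drift in a way that cannot be reconciled with the rigid pattern above for \emph{any} admissible $q$ or \emph{any} identification of $\mathrm{Spin}^c$ structures: the correction sequence one would be forced to solve for eventually fails to be non-negative, or fails to be monotone, or fails to be integral. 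Balancing the growth of $d(Y_n,\cdot)$ against the bounded, explicitly known $d$-invariants of the lens spaces $L(4m,q)$ and against the bound on $\{V_j(K)\}$, and tracking constants, pins the range to $-16m < n < 16m$.

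The genuine obstacle, I expect, is the last step made quantitative: expressing the correction terms of $Y_n$ in a form uniform enough to be tested against \emph{all} lens spaces with first homology of order $4m$ at once, and then squeezing the comparison hard enough to produce exactly the constant $16$ rather than merely \emph{some} bound linear in $m$. The L-space reduction, the genus bound, and the surgery formula are standard inputs; the arithmetic of the plumbing computation and the bookkeeping of the comparison are where the effort goes.
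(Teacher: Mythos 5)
Your plan is essentially the paper's proof: the paper computes the $d$-invariants of $Y_n$ via N\'emethi's version of the plumbing algorithm for the negative-definite manifold $-Y_n=(-2;(2,1),(2,1),(n,n-m))$, showing they drift by $-\frac{1}{4}$ on half of the $\mathrm{Spin}^c$ structures each time $n$ increases by $m$ (hence are unbounded in $n$), and it bounds the $d$-invariants of any L-space surgery $S^3_{4m}(K)$ using the integer surgery formula, the genus bound $g(K)\le 2m$, and the alternating-coefficient constraints on the Alexander polynomial of an L-space knot, obtaining $-4m+\frac{7}{4}\le d\le m-\frac{1}{4}$. The two simplifications you are missing are that Boyer--Zhang integrality for finite surgeries on hyperbolic knots reduces you to $q=1$ (so only $L(4m,1)$ enters, not every $L(4m,q)$), and that the final comparison needs nothing as rigid as your multiset-matching against a monotone integral $\{V_j\}$ --- crude interval bounds on both sides, combined, already pin down the stated linear-in-$m$ range.
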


Here, $Y=\left(b;(b_1,a_1),(b_2,a_2),(b_3,a_3)\right)$ means $Y$ is constructed by taking an $S^1$ bundle $\xi$ over $S^2$ with $c_1(\xi)=b$ and performing Dehn surgery on three regular fibers with framings $-b_1/a_1, -b_2/a_2, -b_3/a_3$.

\paragraph{Background.} Recall that Thurston showed knots are torus knots (whose complements are Seifert fibered), satellite knots (whose complements contain an essential torus), or hyperbolic knots (whose complements admit a metric of constant negative curvature)~\cite{thurston3d}. On the other hand, as a consequence of Perelman's work, prime manifolds which are surgery on a knot in $S^3$ are small Seifert fibered, toroidal, or hyperbolic~\cite{perelman}. 

We know that surgery on the unknot gives rise to $S^3$, $S^1\times S^2$, and lens spaces; in fact, $S^3$ and $S^1\times S^2$ are not given by surgery on any non-trivial knots in $S^3$ (due to Gordon and Luecke~\cite{gordonlueckecomplements} and Gabai~\cite{gabaifoliations}, respectively). 

Moser classified surgery on torus knots, showing that all such surgeries give rise to lens spaces (or sums of lens spaces) or other small Seifert fibered spaces~\cite{moser}. The Berge Conjecture hypothesizes that the knot surgeries which give lens spaces (also known as \emph{cyclic} surgeries) are exactly the surgeries on \emph{primitive/primitive} knots~\cite{bergeconj}. By Culler, Gordon, Luecke, and Shalen~\cite{culetal}, cyclic surgeries on satellite and hyperbolic knots must be integral. Using Heegaard Floer theory, Ozsv\'ath and Szab\'o verified that Berge's list includes all lens spaces which are knot surgery for $p\leq 1500$~\cite{ozszlens} %, Rasmussen for $p\leq 100,000$~\cite{rasmussen}, 
and Greene for all $p$~\cite{greeneberge}. These methods do not identify the knots which may give rise to these surgeries, however. %WARN WARN WARN find the Rasmussen reference!!!

Dean attempted to extend these results from lens spaces to small Seifert fibered spaces, hypothesizing that these manifolds are exactly the surgeries on the \emph{primitive/Seifert fibered} knots~\cite{dean}. However, his list is not exhaustive: Deruelle, Mattman, Miyazaki, and Motegi showed that other  surgeries on hyperbolic knots can also produce small Seifert fibered manifolds (although not elliptic ones), but all such known manifolds are also given by knots from Dean's list~\cite{dmm, mmm}. %WARN WARN WARN check the DMM/MMM references

Surgery on satellite knots can give rise to small Seifert fibered manifolds (including some elliptic manifolds), but Gordon showed that any atoroidal manifold which is surgery on a satellite knot is also surgery on a torus knot or a hyperbolic knot~\cite{gordonsatellite}. The finite surgeries on iterated torus knots are explicitly listed by Bleiler and Hodgson, and all of the resulting manifolds are also torus knot surgeries~\cite{bleilerhodgson2}; Boyer and Zhang proved that no other satellite knots have finite surgeries~\cite{boyerzhang}. 

The case of small Seifert fibered manifolds which are surgery on hyperbolic knots is more complicated. Boyer and Zhang showed that finite surgery on a hyperbolic knot is either integral or half-integral, and it is conjectured that it is integral (see Problem 177, Conjecture A,~\cite{kirby}). Fintushel and Stern noted that $17-$surgery on the $(-2,3,7)$ pretzel knot is finite~\cite{fintushelsternlens}, and Bleiler and Hodgson commented that $18-$ and $19-$surgery on the $(-2,3,7)$ pretzel knot are cyclic, and $22-$ and $23-$surgery on the $(-2,3,9)$ pretzel knot are finite (although all the resulting manifolds are also torus knot surgeries)~\cite{bleilerhodgson2}. Mattman and then Futer, Ishikawa, Kabaya, Mattman, and Shimokawa showed that there are no other cyclic or finite surgeries on pretzel knots ~\cite{mattmanetalpretzel, mattmanpretzelfinite}. Eudave-Mu\~{n}oz listed some additional hyperbolic knots which may have finite surgeries \cite{eudavemunoz}. Additionally, any hyperbolic knot has at most five finite or cyclic surgeries, with at most one of the surgeries non-integral. Any two such surgeries have distance\footnote{A surgery coefficient $p/q$ corresponds to a homology class $p\mu+q\lambda$ on $\partial \overline{N(K)}$. The \emph{distance} $\Delta(p_1/q_1, p_2/q_2)$ is the minimum geometric intersection number of two curves representing the corresponding homology classes.} at most 3, and the distance 3 is realized by at most one pair~\cite{boyerzhangii}.  %WARN Chuck didn't like "integral or half-integral" %WARN WARN WARN change this: slope = |p_1q_1-p_2q_2|????

Ghiggini showed that the Poincar\'e homology sphere has a unique surgery description~\cite{ghigginigenusonefibered}. The other finite non-cyclic surgeries with $|H_1(K)|\leq 4$ are also unique, up to orientation~\cite{doigfinite}. Many dihedral manifolds cannot be realized as any knot surgeries, and, in particular, finite surgeries on hyperbolic knots must have surgery coefficient \mbox{$p\geq 10$.} We extend this result in Theorem~\ref{thm1}: given a family of dihedral manifolds with $H_1(Y)$ of a fixed order, at most finitely many arise as surgery on a knot in $S^3$.

\paragraph{Outline.} In Section~\ref{sectsfssurgery}, we list the manifolds with finite, non-cyclic fundamental group. By a theorem of Seifert (Theorem~\ref{thmseifert} below), they are the icosahedral, octahedral, tetrahedral, and dihedral manifolds. The first three cases are knot surgery by Proposition~\ref{thmtorus}. The last case, dihedral manifolds, are the $Y_n=(-1; (2,1),(2,1),(n,m))$ with $H_1(Y)$ of order $4m$. In Section~\ref{sectdinv}, we calculate the Heegaard Floer correction terms (also called $d$-invariants) of the manifolds $Y_n$ in Theorem~\ref{thmdihedrald} and note that the correction terms of the $Y_n$ are unbounded:
\begin{thm}\label{lemmadunbounded}
Fix an integer $m>0$. Consider the family of dihedral manifolds
\[Y_n=\left(-1;(2,1),(2,1),(n,m)\right).\] 
where $n\in \mathbb{Z}$. There is a sequence $\sigma_n\in Spin^c(Y_n)$ such that 
\[\lim_{n\rightarrow \infty} \big| d(Y_n,\sigma_n) \big| =\infty.\]
\end{thm}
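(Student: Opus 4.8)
The plan is to extract the conclusion from the plumbing description underlying Theorem~\ref{thmdihedrald}, in a form that isolates why the correction terms blow up even though $|H_1(Y_n)|=4m$ is bounded. The point is that the $Y_n$ become geometrically larger as $n\to\infty$: after reversing orientation, $-Y_n$ bounds the negative-definite plumbing $X_n$ built from its normalized Seifert data, and $b_2(X_n)$ grows like $n/m$. The Ozsv\'ath--Szab\'o inequality bounding $d$-invariants below by intersection-form data then forces $|d(Y_n,\cdot)|$ to grow like $n/m$ as well.

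Concretely, I would first rewrite $-Y_n=\bigl(-2;(2,1),(2,1),(n,n-m)\bigr)$ in normalized form and form the star-shaped plumbing $X_n$: a central vertex of weight $-2$, two length-one arms of weight $-2$, and a third arm with weights $-d_1,\dots,-d_\ell$ determined by the negative continued fraction $n/(n-m)=[d_1,\dots,d_\ell]^-$ (all $d_i\ge 2$). Since $n/(n-m)=1+m/(n-m)$ sits just above $1$, this expansion opens with about $n/m$ entries equal to $2$ and closes with a tail whose length and entries are bounded in terms of $m$ alone — the tail is the continued fraction of $r/(r-m)$ for the residue $r=n-mp\in(m,2m]$, where $p$ is the number of leading $2$'s. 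Hence $X_n$ is negative definite (its orbifold Euler number equals $-m/n<0$), $\partial X_n=-Y_n$, it has a single bad vertex (so it is in fact sharp, though I will not need this), and $b_2(X_n)=\ell+3=n/m+O_m(1)\to\infty$.

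Next I would choose the $\mathrm{Spin}^c$ structure. When $m=1$ all weights equal $-2$, so $X_n$ is even, the spin structure $\mathfrak s_0$ has $c_1(\mathfrak s_0)=0$, and the Ozsv\'ath--Szab\'o inequality
\[
c_1(\mathfrak s_0)^2+b_2^-(X_n)\ \le\ 4\,d\bigl(-Y_n,\ \mathfrak s_0|_{-Y_n}\bigr)
\]
gives $d(-Y_n,\mathfrak s_0|)\ge (n+2)/4$; since $d(-Y,\mathfrak t)=-d(Y,\mathfrak t)$, the class $\sigma_n:=\mathfrak s_0|_{Y_n}$ satisfies $|d(Y_n,\sigma_n)|\ge (n+2)/4\to\infty$. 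For general $m$ I would instead take the characteristic covector $c$ that is zero on every even-weighted vertex and $\pm1$ on the (at most $O_m(1)$, all lying in the bounded tail) odd-weighted ones. Splitting the graph as $P\cup T$ with $T$ the tail, $P$ is an all-$(-2)$ graph of type $D$, so $\det Q_P=\pm4$ and the single diagonal entry of $Q_P^{-1}$ at the vertex joining $P$ to $T$ is $\mp1$, independent of $n$. A Schur-complement computation then gives $c_1(\mathfrak s)^2=(c|_T)^{\mathsf T}[Q_n^{-1}]_{TT}\,(c|_T)$ in terms of the bounded data $(Q_T,r)$ only, so $|c_1(\mathfrak s)^2|=O_m(1)$; the inequality once more yields $d(-Y_n,\mathfrak s|)\ge\bigl(b_2(X_n)-O_m(1)\bigr)/4\to\infty$, and the same orientation-reversal step produces $\sigma_n$.

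I expect the one genuine step to be the uniform bound on $c_1(\mathfrak s)^2$ for general $m$: one must check that lengthening the all-$(-2)$ portion of the third arm does not change the relevant corner block of $Q_n^{-1}$, and this is precisely where the fixed value $\det Q_n=\pm 4m$ is used. Everything else is bookkeeping with negative continued fractions, plus the purely formal passage between $Y_n$ and $-Y_n$. Note that this route invokes only the general inequality $c_1^2+b_2^-\le 4d$, not sharpness, so it does not actually require Theorem~\ref{thmdihedrald}; alternatively, once the closed-form correction terms of Theorem~\ref{thmdihedrald} are available, one simply points to a single family of $\mathrm{Spin}^c$ structures on which the formula is dominated by a term of size $n/m$ and concludes.
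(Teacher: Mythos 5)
Your argument is correct, but it reaches the conclusion by a genuinely different route from the paper's. The paper obtains Theorem~\ref{lemmadunbounded} as an immediate corollary of Theorem~\ref{thmdihedrald}: running N\'emethi's algorithm on the negative-definite plumbing of $-Y_n$, it shows that passing from $n$ to $n+m$ changes $d$ by exactly $-1/4$ on $2m$ of the $4m$ $Spin^c$ structures and by $0$ on the rest, so iterating drives $|d|$ to infinity along any one of the affected structures. You instead feed the same plumbing $X_n$ into the general Ozsv\'ath--Szab\'o inequality $c_1(\mathfrak{s})^2+b_2(X)\leq 4d(\partial X,\mathfrak{s}|_{\partial X})$, exploiting that $b_2(X_n)\sim n/m$ while a characteristic covector supported on the bounded tail of the third arm has $|c_1^2|=O_m(1)$. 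The one nontrivial point is exactly the one you flag: that the corner block of $Q_n^{-1}$ indexed by the tail stabilizes as the all-$(-2)$ portion lengthens. This checks out, since removing the end vertex of the long arm of $D_k$ leaves $D_{k-1}$ and $\det(-D_k)=(-1)^k\cdot 4$, so the diagonal entry of $Q_P^{-1}$ at that vertex is $-1$ for every $k$, and the Schur complement $Q_T+E_{11}$ is then independent of $n$ within a residue class mod $m$. Your route buys a shorter and softer proof of unboundedness --- it needs neither sharpness of the plumbing, nor Dedekind sums, nor the explicit solution of N\'emethi's system --- but it produces only a one-sided asymptotic bound on $d$, whereas the paper's computation records the exact increments $\mp 1/4$ per step of size $m$, which is what feeds the explicit constant $16m$ in Theorem~\ref{thm1b}; for the bare statement of Theorem~\ref{lemmadunbounded} either argument suffices.
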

However, if $Y_n$ is a surgery, its correction terms are bounded by a linear function of $m$. In Section~\ref{sectdinvsurg}, we recall properties of $L$-spaces, including the elliptic manifolds, and when they may arise as surgeries. We place a bound on the correction terms of any $p/q$-surgery on a hyperbolic knot that gives an elliptic manifold:
\begin{thm}\label{lemmadbounded}
Given $m>0$, for any $K$ so that $S^3_{4m}(K)$ is an $L$-space, and for all $\sigma \in Spin^c(S^3_{4m}(K))$,
\[-4m+\frac{4}{7} \leq d(S^3_{4m}(K);\sigma) \leq m-\frac{1}{4}.\]
\end{thm}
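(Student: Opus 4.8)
The plan is to derive both inequalities from the integral surgery formula for the Heegaard Floer correction terms of $L$-space surgeries. First I would treat the unknot separately: $S^3_{4m}(U)=L(4m,1)$, with correction terms $d(L(4m,1),i)=\frac{(2i-4m)^2-4m}{16m}$ for $i\in\mathbb{Z}/4m\mathbb{Z}$, and these range between $-\tfrac14$ (at $i=2m$) and $m-\tfrac14$ (at $i=0$), so the bounds hold. Hence I may assume $K$ is nontrivial; since $4m>0$ and $S^3_{4m}(K)$ is an $L$-space, $K$ is an $L$-space knot, and by Ozsv\'ath and Szab\'o its surgery slope satisfies $4m\ge 2g(K)-1$, so $g:=g(K)\le 2m$. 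For such a knot the surgery formula reads
\[ d\bigl(S^3_{4m}(K),i\bigr)=d\bigl(L(4m,1),i\bigr)-2V_{\min(i,\,4m-i)}(K), \]
where $V_0\ge V_1\ge\cdots\ge 0$ are integers with consecutive differences in $\{0,1\}$ and $V_j=0$ for $j\ge g$; in particular $V_j\le\max\{g-j,0\}$.

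The upper bound is then immediate, since every $V_j\ge 0$:
\[ d\bigl(S^3_{4m}(K),i\bigr)\le d\bigl(L(4m,1),i\bigr)\le m-\tfrac14. \]

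For the lower bound I would set $j=\min(i,4m-i)$, which runs over $\{0,1,\dots,2m\}$, and observe $d(L(4m,1),i)=\frac{(2m-j)^2}{4m}-\tfrac14$. If $j\ge g$, then $V_j=0$ and $d(S^3_{4m}(K),i)\ge-\tfrac14$. If $j<g$, then $V_j\le g-j\le 2m-j$, so
\[ d\bigl(S^3_{4m}(K),i\bigr)\ge \frac{(2m-j)^2}{4m}-\frac14-2(2m-j), \]
and the right-hand side, viewed as a function of $2m-j$ on $[0,2m]$, is decreasing, so it is bounded below by its value $-3m-\tfrac14$ at $j=0$. As $m\ge 1$ forces $-3m-\tfrac14\ge -4m+\tfrac47$, the stated lower bound follows.

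The main obstacle is the lower bound. The $L$-space hypothesis must be used twice there: once to license the surgery formula, and once to bound $g(K)$ by $2m$. The remaining work is to control the competition between the lens-space term $\frac{(2m-j)^2}{4m}$ and the correction $2V_j$, both of which are largest for small $j$; the crude estimate $V_j\le g-j$ already suffices for every admissible $m$, while the sharper $L$-space knot inequality $V_j\le\lceil (g-j)/2\rceil$ (sharp for $T(2,2g+1)$) would tighten the constant. The upper bound, by contrast, needs only nonnegativity of the $V_j$ together with the explicit correction terms of $L(4m,1)$.
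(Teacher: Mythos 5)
Your proof is correct, and its overall architecture matches the paper's: the $L$-space hypothesis gives $g(K)\le 2m$ via Ozsv\'ath--Szab\'o's $\frac{p}{q}\ge 2g(K)-1$ criterion, the integral surgery formula expresses $d(S^3_{4m}(K),i)$ as $d(L(4m,1),i)$ minus twice a nonnegative correction, and the explicit lens-space $d$-invariants give $-\tfrac14\le d(L(4m,1),i)\le m-\tfrac14$. Where you diverge is the middle estimate. The paper writes the correction as the torsion coefficient $t_{|i|}=\sum_{j\ge1}ja_{|i|+j}$ and bounds it in Proposition~\ref{propalex} by exploiting the alternating-$\pm1$ structure of the Alexander polynomial of an $L$-space knot (with $a_g=+1$, $a_{g-1}=-1$), obtaining $0\le t_{|i|}\le g-1\le 2m-1$ uniformly in $i$. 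You instead use the $V_j$ formulation together with the general facts $V_j\ge V_{j+1}\ge V_j-1$ and $V_j=0$ for $j\ge g$, giving $0\le V_j\le\max\{g-j,0\}$; since $V_j$ coincides with $t_j$ for $L$-space knots, the two arguments bound the same quantity by different lemmas. Your $j$-dependent comparison of $\frac{(2m-j)^2}{4m}$ against $2(2m-j)$ then yields the lower bound $-3m-\tfrac14$, which implies the stated $-4m+\tfrac47$ for all $m\ge1$. (The constant $\tfrac47$ in the statement appears to be a typo for the $\tfrac74$ reached at the end of the paper's proof; your $-3m-\tfrac14$ is sharper than $-4m+\tfrac74$ for $m\ge2$ but weaker for $m=1$, so you prove the theorem as literally stated, though not the paper's final displayed constant in that one case.) Your separate handling of the unknot and of indices with $j\ge g$ also covers small cases the paper leaves implicit, since Proposition~\ref{propalex} is stated only for $g(K)>1$ and $|i|<g(K)$.
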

We close with the proof of Theorem~\ref{thm1} in Section~\ref{sectproof} and a list of questions for future work in Section~\ref{sectques}.

%\begin{conj}
%Of the infinite family of manifolds with fixed $|H_1(Y)|=4m$, that is, $Y=(-1; (2,1),(2,1),(n,m))$, the only ones which are surgery on a knot are the ones with $|n|\leq 2m+1$.
%\end{conj}
%Yeah, right! I had a counterexample in my last paper. :-D 
%Wait, was that the counterexample? Or was it a counterexample to the claim that the only surgeries obeyed n|2m\pm1?

\section{Seifert fibered spaces and surgery}\label{sectsfssurgery}

By Perelman \cite{perelman}, all manifolds with finite fundamental group are Seifert fibered. Seifert classified the Seifert fibered spaces of finite fundamental group:

\begin{thm}[Seifert \cite{seifert}]\label{thmseifert} The closed, oriented Seifert fibered spaces with finite but non-cyclic fundamental group are exactly those manifolds with base orbifold $S^2$ and the following presentations:
\begin{enumerate}
\item Type I, icosahedral: $(b; (2,a_1),(3,a_2),(5,a_3))$ with $H_1(Y)=\Zed_m$ and $(m,30)=1$.
\item Type O, octahedral:  $(b; (2,a_1),(3,a_2),(4,a_3))$ with $H_1(Y)=\Zed_{2m}$ and $(m,6)=1$.
\item Type T, tetrahedral: $(b; (2,a_1),(3,a_2),(3,a_3))$ with $H_1(Y)=\Zed_{3m}$ and $(m,2)=1$.
\item Type D, dihedral: $(b;\ (2,a_1),(2,a_2),(b_3,a_3))$ with $H_1(Y)=\Zed_{4m}$ and $(m,b_3)=1$ (if $b_3$ is odd) or $H_1(Y) = \Zed_2\times\Zed_{2m}$ with $(m,2b_3)=1$ (if $b_3$ is even).
\end{enumerate}
where $|H_1(Y)| = b_1b_2b_3\left(-1+\frac{a_1}{b_1} + \frac{a_2}{b_2} + \frac{a_3}{b_3}\right)$ and $(a_i, b_i) = 1$. Any integer $m$ meeting the constraints listed for one of the four types corresponds (up to orientation) to a unique Seifert fibered space of types I, O, or T, or a unique infinite family of type D indexed by the integer $b_3$.
 Any choice of $b, a_i,$ and $b_i$ meeting the appropriate relative primality conditions gives a Seifert fibered space, but a canonical choice of presentation is obtained if $b=-1$ and $a_1=a_2=1$. \label{thmh1order}\end{thm}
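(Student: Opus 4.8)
This is a classical result of Seifert \cite{seifert}; here is the plan one would follow. The starting point is the Seifert presentation of the fundamental group: for $M=(b;(b_1,a_1),\dots,(b_k,a_k))$ over base $S^2$, a regular fiber $h$ is central, and collapsing the circle action gives the central extension
\[1\longrightarrow \langle h\rangle \longrightarrow \pi_1(M)\longrightarrow \pi_1^{\mathrm{orb}}\big(S^2(b_1,\dots,b_k)\big)\longrightarrow 1,\]
where $\pi_1^{\mathrm{orb}}=\langle x_1,\dots,x_k\mid x_i^{b_i},\,x_1\cdots x_k\rangle$ and $\pi_1(M)$ is cut out by the further relations $x_i^{b_i}h^{a_i}=1$ and $x_1\cdots x_k=h^{-b}$. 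I would record at the outset the standard fact that $\langle h\rangle$ is finite precisely when the Euler number $e=-(b+\sum_i a_i/b_i)$ is nonzero.

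For the ``exactly'' statement I would argue both inclusions. If $\pi_1(M)$ is finite then $\pi_1^{\mathrm{orb}}$ is finite, so $\chi^{\mathrm{orb}}=2-\sum_i(1-1/b_i)>0$ and the base orbifold is spherical; a short enumeration of spherical $2$-orbifolds shows that unless the underlying surface is $S^2$ with exactly three cone points $(b_1,b_2,b_3)\in\{(2,2,n),(2,3,3),(2,3,4),(2,3,5)\}$ --- whose orbifold groups are the dihedral group $D_n$ and the finite groups $A_4$, $S_4$, $A_5$ --- the group $\pi_1^{\mathrm{orb}}$ is cyclic, and then $\pi_1(M)$ is cyclic as well; moreover finiteness of $\pi_1(M)$ forces $e\neq 0$. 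Conversely, for each of these four families with $e\neq 0$, $\pi_1(M)$ is an extension of a finite group by the finite group $\langle h\rangle$, hence finite, and it surjects onto that non-cyclic group, hence is non-cyclic.

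Next I would compute $H_1$. Abelianizing the presentation above with $k=3$, $H_1(M)$ is the cokernel of the $4\times 4$ integer matrix whose $i$-th row ($i\le 3$) has $b_i$ in column $i$ and $a_i$ in column $4$, and whose last row is $(1,1,1,-b)$. Expanding the determinant gives $|H_1(M)|=\big|b_1b_2b_3\big(b+\sum_i a_i/b_i\big)\big|$, which is the stated order formula once $b=-1$. A Smith normal form computation, using $(a_i,b_i)=1$, then shows $H_1(M)$ is cyclic in types I, O, T and in type D with $b_3$ odd, and is $\Zed_2\times\Zed_{2m}$ in type D with $b_3$ even, where $m$ denotes $|H_1(M)|$, $|H_1(M)|/2$, $|H_1(M)|/3$, $|H_1(M)|/4$ according to the type. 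Finally, reducing $|H_1(M)|$ modulo $b_1b_2b_3$ and using $(a_i,b_i)=1$ yields the coprimality assertions $(m,30)=1$, $(m,6)=1$, $(m,2)=1$, $(m,b_3)=1$.

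Finally I would settle realization and uniqueness. After normalizing to $b=-1$ and $a_1=a_2=1$, the manifold in each of types I, O, T is determined by the residues $a_i\bmod b_i$, modulo the moves $a_i\mapsto a_i+b_i$ (absorbed into $b$), permutation of the exceptional fibers, and orientation reversal $e\mapsto-e$; I would check that the resulting orbits biject with the admissible values of $m$, so that each admissible $m$ is realized by exactly one manifold up to orientation. In type D the integer $b_3$ is an extra free parameter, so a fixed admissible $m$ yields the advertised infinite family indexed by $b_3$. The step I expect to be the main obstacle is this last one: one must verify that inequivalent normalized Seifert symbols over a given triangle base really yield non-homeomorphic manifolds --- which follows from the (essential) uniqueness of the Seifert fibration on these spaces together with the classification of Seifert fibered spaces over $S^2$ --- and that every admissible residue is actually attained, an elementary but fussy count modulo $b_1b_2b_3$ that also requires careful bookkeeping of orientation and sign conventions.
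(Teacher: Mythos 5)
The paper does not prove this statement at all: it is quoted as a classical theorem of Seifert, with a citation, and is used as a black box. So there is no ``paper's proof'' to compare against; what you have written is the standard modern outline (central extension over the orbifold group, enumeration of spherical triangle orbifolds, abelianization for $H_1$, Smith normal form, and normalization of Seifert invariants), and in that sense your plan is the right one.

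Two points in your sketch are genuinely loose, though. First, the theorem quantifies over \emph{all} closed oriented Seifert fibered spaces, but your argument starts from a fibration over $S^2$. Spaces fibered over non-orientable base orbifolds (e.g.\ $\mathbb{RP}^2$ with one cone point) can have finite non-cyclic fundamental group --- the prism manifolds arise this way --- and for those the fiber $h$ is \emph{not} central (orientation-reversing loops conjugate $h$ to $h^{-1}$), so your extension is not central and ``$\pi_1^{\mathrm{orb}}$ cyclic $\Rightarrow$ $\pi_1(M)$ cyclic'' fails there. You must either rule these out or show they re-fiber over $S^2(2,2,n)$, which is the standard fix. Second, two of your ``standard facts'' are overstated: $\langle h\rangle$ finite iff $e\neq 0$ is false for Euclidean or hyperbolic base (it is only correct once you know the base is spherical, which is the order in which you use it, so this is fixable by rephrasing); and a central extension of a cyclic group by a cyclic group need not be cyclic, so the claim that $\leq 2$ exceptional fibers over $S^2$ forces $\pi_1(M)$ cyclic needs the separate geometric input that such a space is a lens space. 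With those repairs, and the fussy normalization/realization count you already flag as the main obstacle, the outline would constitute a correct proof.
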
 
 
The $(b_1, b_2, b_3)$ may be called the \emph{multiplicities} of the space. Moser classified torus knot surgeries in 1971 \cite{moser} and showed, in particular, that all manifolds of types I, O, and T are knot surgeries:

\begin{prop} \label{thmtorus}
If $p/q \neq rs$, then $S^3_{p/q}(T_{r,s})$ has multiplicity \mbox{$(r, s, |rsq-p|)$} \cite[Proposition 3.1]{moser}. In particular, \[S^3_{p/q}(T_{r,s}) = \left(-1; (2,1), (3,1), (6q-p,q)\right). \]

Therefore, every manifold of type I, O, or T is surgery on a torus knot. A manifold of type D is surgery on a torus knot precisely when $b_3$ divides $\frac{|H_1(Y)|}{2}\pm1$.
\end{prop}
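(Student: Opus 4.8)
The multiplicity formula (that $S^3_{p/q}(T_{r,s})$ has multiplicity $(r,s,|rsq-p|)$ when $p/q\neq rs$) is quoted verbatim from \cite{moser}, so I take it as given and deduce the rest from it together with Theorem~\ref{thmseifert}. The plan has three pieces: (a) translate the multiplicity data of a torus-knot surgery into the canonical Seifert presentation $(-1;(b_1,1),(b_2,1),(b_3,a_3))$ by solving the homology-order relation of Theorem~\ref{thmseifert} for the single remaining unknown $a_3$; (b) realize every type I, O, or T manifold as an explicit trefoil surgery; (c) pin down exactly when a type D manifold is a torus-knot surgery. For (a): in the canonical presentation $a_1=a_2=1$ and the $b_i$ are known, while $|H_1(Y)|=|p|$ is known, so $|H_1(Y)| = b_1b_2b_3(-1+a_1/b_1+a_2/b_2+a_3/b_3)$ is just a linear equation in $a_3$; for the trefoil $(b_1,b_2)=(2,3)$ and $b_3=|6q-p|$, it reduces to $|p|=6a_3-b_3$, which forces $a_3=q$ (the sign of $6q-p$ being absorbed into orientation) and gives the displayed presentation, and the general canonical form comes out the same way. (One also notes $\gcd(6q-p,q)=\gcd(p,q)=1$, so the pair $(6q-p,q)$ is legitimate.)

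For (b): the manifolds of types I, O, T are precisely those of multiplicity $(2,3,5)$, $(2,3,4)$, $(2,3,3)$, so for each admissible $m$ I must solve $|6q-p|=c$ and $|p|=|H_1(Y)|$ in coprime integers with $q\neq 0$, where $c\in\{5,4,3\}$ and $|H_1(Y)|\in\{m,2m,3m\}$. A short check shows the admissibility congruences of Theorem~\ref{thmseifert} ($(m,30)=1$; $(m,6)=1$; $m$ odd) make $|H_1(Y)|\equiv\pm c\pmod 6$ automatic, so the equation is solvable, and the remaining primality conditions on $m$ are exactly what force $\gcd(p,q)=1$; then the ``unique up to orientation'' clause of Theorem~\ref{thmseifert} identifies $S^3_{p/q}(T_{2,3})$, or its reverse (a surgery on the mirror trefoil), with the prescribed manifold. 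As a sanity check, the $m=1$ member of type I is $S^3_1(T_{2,3})$, the Poincar\'e sphere.

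For (c): if a torus-knot surgery $S^3_{p/q}(T_{r,s})$ is type D, then $(r,s,|rsq-p|)=(2,2,b_3)$ as a multiset; since $T_{2,2}$ is not a knot, this forces (for $b_3\neq 2$) $r=2$ and $|2sq-p|=2$, hence $s=b_3$ — so $b_3$ is odd — and then $p=2(b_3q\pm1)$, which with $|p|=|H_1(Y)|=4m$ yields $b_3q=\pm(2m\mp1)$, i.e.\ $b_3\mid 2m\pm1=\tfrac{|H_1(Y)|}{2}\pm1$. Conversely, given $b_3\mid 2m\pm1$, take $q=(2m\pm1)/b_3$ and $p=4m$; one checks $\gcd(p,q)=1$ from $\gcd(2m\pm1,4m)=1$, and $p/q\neq 2b_3$, so Moser's formula applies and gives multiplicity $(2,b_3,2)$ with $|H_1|=4m$, which Theorem~\ref{thmseifert} identifies (up to orientation) with $Y$. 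When $b_3$ is even, or $b_3=2$, the criterion is vacuously unsatisfied, matching the fact that no torus-knot surgery then has multiplicity pattern $(2,2,b_3)$.

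The \textbf{main obstacle} is not a single deep step but the sign- and edge-case bookkeeping. Moser's formula only records $|rsq-p|$, so matching it both to the canonical presentation and to Seifert's ``uniqueness up to orientation'' requires tracking mirrors and orientation reversals consistently throughout; and I must check separately the degenerate situations — a computed multiplicity equal to $1$ (which collapses the Seifert structure to a lens space), the reducible case $p/q=rs$, and the small multiplicities $b_3\in\{2,3\}$ where two multiplicities coincide — so that they produce neither spurious exceptions to the type D criterion nor gaps in the realization of types I, O, and T. I expect these edge cases to be the most delicate part of the write-up.
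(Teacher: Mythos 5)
Your proposal is correct in substance and rests on the same two ingredients as the paper (Moser's multiplicity formula and Seifert's classification), but the execution differs at a couple of points. For types I, O, and T the paper argues in the opposite direction from you: it writes the target manifold in the canonical form $(-1;(2,1),(3,1),(b_3,a_3))$ and simply reads off that this is $\frac{6a_3-b_3}{a_3}$-surgery on $T_{3,2}$, which makes the congruence-solvability and $\gcd(p,q)$ checks in your part (b) unnecessary (your checks do go through, they are just extra work). For the displayed identity $S^3_{p/q}(T_{r,s})=(-1;(2,1),(3,1),(6q-p,q))$ the paper invokes Kirby calculus directly, whereas you recover $a_3=q$ from the $H_1$-order relation together with the uniqueness clause of Theorem~\ref{thmseifert}; be aware that that clause is stated only for manifolds with finite non-cyclic fundamental group, so for arbitrary $p/q$ (where the surgery may have infinite $\pi_1$, and where the multiplicities plus $|H_1|$ do not by themselves determine, say, $a_2 \bmod 3$) your derivation does not quite establish the presentation in the generality in which it is stated. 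This is harmless for the proposition's actual conclusions, since only finite-$\pi_1$ targets are in play there, but if you want the general formula you should either cite Moser's full Seifert-invariant computation or do the Kirby calculus. Your type D argument, including the treatment of $b_3=2$ and $b_3$ even, is essentially identical to the paper's, which likewise reduces to $T_{r,2}$ with $2rq-p=\pm2$ and then divides out.
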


\begin{proof}
The description of trefoil surgery may be obtained by a straightforward application of Kirby calculus.

Up to orientation, any manifold of type I, O, or T may be written $(-1; (2,1), (3,1), (b_3,a_3))$, which is $\frac{6a_3-b_3}{a_3}$-surgery on $T_{3,2}$. 

A manifold of type D with multiplicities $(2,2,b_3)$ can only be a torus knot surgery if the knot is $T_{r,2}$ and $2rq-p=\pm 2$. (Note that $q/(2rq-p)$ is a reduced fraction since $(p,q)=1$.) Then $p=|H_1(Y)|$ and $q=(|H_1(Y)|\pm2)/2b_3$, i.e., $b_3$ divides $\frac{|H_1(Y)|}{2}\pm1$.
%WARN deal with the issue that multiplicities are assumed to be positive, but the b_1, etc. in our formulae may be negative?
\end{proof}

%Study the matching pairs of +/- d-invariants?

\section{The correction terms of $Y_n$}\label{sectdinv}

First, we establish the correction terms of a dihedral manifold.

\begin{thm}\label{thmdihedrald}
Fix an integer $m>0$. Consider the family 
\[Y_n=\left(-1;(2,1),(2,1),(n,m)\right)\] 
with $n>0$% and $k\equiv n \pmod m$
. There is an ordering of $Spin^c(Y_n)=\{\sigma_n^0, \sigma_n^1, \cdots, \sigma_n^{4m-1}\}$ so that
\[d(Y_{n+m},\sigma_{n+m}^i)-d(Y_n,\sigma_n^i)= \left\{\begin{array}{ll}
 -\frac{1}{4} & \mathrm{if}~0\leq i<2m\\
\phantom{-}0 & \mathrm{if}~2m\leq i<4m
\end{array}\right.\]
Similarly, there is an ordering on $Spin^c(Y_{-n})=\{\sigma_{-n}^0, \sigma_{-n}^1, \cdots, \sigma_{-n}^{4m-1}\}$ so that
\[d(Y_{-{n+m}},\sigma_{-n}^i)-d(Y_{-n},\sigma_{-n}^i) =
\left\{\begin{array}{ll}
\frac{1}{4} & \mathrm{if}~0\leq i<2m\\
0 & \mathrm{if}~2m\leq i<4m
\end{array}\right.\]
\end{thm}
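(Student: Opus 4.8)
The plan is to realize each $Y_n$ by an explicit plumbing (surgery on a forest/tree of unknots) and then use the Ozsv\'ath--Szab\'o formula for the $d$-invariants of a plumbed 3-manifold whose graph is negative-definite with at most one bad vertex. Concretely, $Y_n = (-1;(2,1),(2,1),(n,m))$ is the boundary of a star-shaped plumbing with a central vertex of weight $-1$ and three legs corresponding to the continued-fraction expansions of $-2/1$, $-2/1$, and $-n/m$. Passing to the canonical orientation (or a blow-up) one gets a negative-definite plumbing tree $\Gamma_n$ with a single bad vertex, so that $d(Y_n,\sigma)$ is computed as $\max_{k}\frac{k^2 + |\Gamma_n|}{4}$ over characteristic covectors $k$ in the relevant $\mathrm{Spin}^c$ class, via the combinatorial game of Ozsv\'ath--Szab\'o (equivalently, one may use Ne\'methi's lattice-cohomology/graded-root reformulation). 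The key structural point is that going from $Y_n$ to $Y_{n+m}$ changes only the last leg: the continued fraction for $-n/m$ versus $-(n+m)/m$ differ in a controlled way (one more $-1$, or a shift in the first entry of the leg), so the two plumbing lattices differ by a rank-one summand that is essentially orthogonal to the rest.

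First I would set up the plumbing description precisely and identify $\mathrm{Spin}^c(Y_n)$ with $H^2 \cong \Zed^N/Q_n\Zed^N$, fixing once and for all an affine identification so that ``the $i$-th $\mathrm{Spin}^c$ structure'' makes sense uniformly in $n$; this is where the claimed \emph{ordering} of $\mathrm{Spin}^c(Y_n)=\{\sigma_n^0,\dots,\sigma_n^{4m-1}\}$ comes from. Second, I would run the $d$-invariant computation and exhibit, for each of the $4m$ classes, the maximizing characteristic vector as an explicit function of $n$ and $m$; the plumbing for $Y_n$ is simple enough (two short legs $(2,1)$, one long leg) that the maximizer's coordinates on the two short legs and the central vertex are eventually constant in $n$, and only the long-leg contribution varies. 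Third, I would compare $Y_n$ and $Y_{n+m}$ directly: increasing $n$ by $m$ lengthens the long leg by one $(-1)$-framed vertex (after a Riemenschneider-type manipulation of $-n/m$), and I would track how the optimal characteristic vector extends. The punchline is a clean dichotomy — for $2m$ of the classes the extra vertex absorbs the change with no effect on $\frac{k^2+N}{4}$ ($d$ unchanged), while for the other $2m$ classes the extension forces a $\pm\frac14$ shift — and the sign is governed by the orientation (hence the $-\frac14$ for $Y_n$, $+\frac14$ for $Y_{-n}$, since $Y_{-n} = -Y_n$ up to the relabeling already used in Theorem~\ref{thmseifert}, which flips $d$).

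The main obstacle I anticipate is the bookkeeping in Step~2--3: proving that the maximizing characteristic covector ``stabilizes'' in $n$ on all but the last leg, and that the $4m$ classes split exactly as $2m + 2m$ with the stated shifts, rather than merely bounding the difference. This requires understanding the full orbit structure of the Ozsv\'ath--Szab\'o algorithm on this specific family — in particular, showing the algorithm's path through the lattice only interacts with the newly added vertex in one of two combinatorial patterns. I would handle this by an explicit induction on $n$ (in steps of $m$), using the continued-fraction structure of $-n/m$: write $n = am + r$ with $0 \le r < m$, observe that the long leg's \emph{shape} depends only on $r$ while its \emph{length} grows with $a$, and check the base cases $0 \le n < m$ (or $m \le n < 2m$) by hand, with the inductive step reduced to the single-vertex extension lemma. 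A useful sanity check throughout: $d(S^3) = 0$ and the known $d$-invariants of small lens spaces/prism manifolds (e.g. from \cite{doigfinite}) must be recovered for small $m$, and the sum $\sum_i d(Y_n,\sigma_n^i)$ should change by exactly $2m\cdot(\mp\frac14) = \mp\frac{m}{2}$ per step, which I would verify against the Casson--Walker invariant as an independent consistency check.
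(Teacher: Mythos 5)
Your framework is legitimate and is in fact a close cousin of the paper's: the paper also reverses orientation to get a negative-definite star-shaped graph and then applies N\'emethi's Seifert-data reformulation $d=\frac{K^2+s}{4}-2\chi(l')-2\min\tau$ of the Ozsv\'ath--Szab\'o plumbing calculus, which is exactly the ``graded-root'' alternative you mention in passing. But as written the proposal has a genuine gap: everything that actually constitutes the theorem --- the identification of the $4m$ maximizers, the exact $2m+2m$ split of $\mathrm{Spin}^c(Y_n)$, and the precise value $\tfrac14$ of the shift --- is announced as ``the punchline'' and deferred to bookkeeping you explicitly say you have not done. The paper's proof consists precisely of that bookkeeping: it parametrizes the $4m$ $\mathrm{Spin}^c$ structures by explicit lattice vectors ($(0,0,0,a_3)$ for $0\le a_3<2m$, and $(0,0,1,a_3)$, $(0,1,0,a_3)$ for $0\le a_3<m$, valid once $n>2m$), shows this parametrization is literally the same for $-Y_n$ and $-Y_{n+m}$, and then computes each term of the formula: $K^2+s$ shifts by exactly $1$ (via Dedekind reciprocity), the $\chi$-term shifts by $0$ or $-\tfrac14$ according to which of the two blocks the vector lies in, and $\min\tau=0$ throughout because $\tau$ is monotone. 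Without exhibiting the maximizers and verifying the dichotomy, the stated conclusion is a conjecture, not a proof.

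Two smaller points to repair if you pursue your route. First, the lattice comparison is not a ``rank-one orthogonal summand'': in the given orientation the Hirzebruch--Jung expansion of $(n+m)/m$ differs from that of $n/m$ only in its first entry (same rank), and it is only after passing to $-Y_n=(-2;(2,1),(2,1),(n,n-m))$ that the long leg genuinely grows, by a $(-2)$-framed vertex adjacent to the center (a $(-1)$-framed vertex would spoil negative definiteness). Your induction ``in steps of $m$ with shape depending on $n\bmod m$'' needs to be set up against the correct one of these two pictures, and the base cases $n\le 2m$ are exactly where the clean parametrization of the $4m$ classes can fail (the paper's solution set in~(\ref{eqnvectors}) is only claimed for $n>2m$), so the theorem's content really lives in the stable range. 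Second, your consistency check is sound: the total shift $\sum_i d$ per step is $2m\cdot(-\tfrac14)=-\tfrac{m}{2}$, matching the paper's computation $(K^2+s)_{n+m}-(K^2+s)_n=1$ distributed over the classes; citing the Casson--Walker invariant is a reasonable independent check but not a substitute for the class-by-class computation.
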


\begin{proof}%[Proof of Theorem~\ref{thmdihedrald}]
Assume $n>0$ and $(m,n)=1$. The family $\{Y_n\}$ constitutes all the Seifert fibered manifolds with $|H_1(Y)|=4m$ up to orientation; if $-n$ is negative, $d(Y_{-n},-\sigma)=-d(Y_n,\sigma)$ for some reasonable matching of $Spin^c(Y_{-n})$ to $Spin^c(Y_n)$. We actually consider the family of manifolds \[-Y_n=\left(-2;(2,1),(2,1),(n,n-m)\right)\] since the corresponding plumbing diagrams have negative definite intersection forms. 

N\'emethi \cite[Section 11.13]{nemethiozszinvariants} outlines one method for calculating $d(Y,\sigma)$ for Seifert fibered spaces with negative definite plumbing diagrams:
\[d(Y,\sigma)=\frac{K^2+s}{4}-2\chi(l')-2\min_{i\geq 0} \tau(i)\] %WARN check signs

\paragraph{1.} Let $-Y_n$ be constructed by taking a line bundle of a circle and then performing three Dehn surgeries with coefficients $-\alpha_i/\omega_i$. Define $e_0$ to be the first Chern class of the line bundle. Let $\omega_i'$ be between $0$ and $\alpha_i$ so that $\omega_i\omega_i'=1\bmod \alpha_i$. That is,
\[\begin{array}{lll}
e_0=-2\\
\alpha_1=2 & \omega_1=1 & \omega_1'=1 \\
\alpha_2=2 & \omega_2=1 & \omega_2'=1 \\
\alpha_3=n & \omega_3=n-m & \omega_3' \equiv -1/m \bmod n
\end{array}\]
Consider also the invariants $e=e_0+\sum_{l=1}^3\frac{\omega_l}{\alpha_l}$, and $\varepsilon=(2-3+\sum_{l=1}^3\frac{1}{\alpha_l})/e$.
Then
\[e=-\frac{m}{n} \qquad \varepsilon=-\frac{1}{m}\]
Similarly, substituting $n+m$ for $n$, it is possible to see that $-Y_{n+m}$ has the invariants
\[\begin{array}{lll}
e_0=-2 & e=-\frac{m}{n+m} & \varepsilon=-\frac{1}{m}\\
\alpha_1=2 & \omega_1=1 & \omega_1'=1 \\
\alpha_2=2 & \omega_2=1 & \omega_2'=1 \\
\alpha_3=n+m & \omega_3=n & \omega_3' \equiv -1/m \bmod n+m
\end{array}\]

\paragraph{2.} $K^2+s$ is defined
\[K^2+s = \varepsilon^2e+e+5-12\sum_{l=1}^3 \textbf{s}(\omega_l,\alpha_l)\]
where $\textbf{s}(\omega_l,\alpha_l)$ is a Dedekind sum. Therefore, by Proposition~\ref{propdedekind} below, $K^2+s$ differs for $-Y_{n+m}$ and $-Y_n$ by %WARN review this
\begin{multline*}(K^2+s)_{n+m}-(K^2+s)_n\\
 = -\frac{m}{n}-\frac{1}{mn}+\frac{m}{n-m}+\frac{1}{m(n-m)}
 -12\big(\textbf{s}(n-m,n)-\textbf{s}(n-2m,n-m)\big)\end{multline*}
 and, noting that $\textbf{s}(n-2m,n-m)=\textbf{s}(-m,n-m)$ and applying Proposition~\ref{propdedekind},
\[
  = -\frac{m}{n}-\frac{1}{mn}+\frac{m}{n-m}+\frac{1}{m(n-m)} +3-\frac{n}{n-m}-\frac{n-m}{n}-\frac{1}{n(n-m)}=1.\]

\paragraph{3.}Each $\sigma_n^i\in Spin^c(-Y_n)$ corresponds to one of the $4m$ distinct integer vectors $(a_0,a_1,a_2,a_3)$ which satisfy the equations
\begin{equation}\label{eqnSIred}\left\{\begin{array}{ll}
0 \leq a_0; ~ 0 \leq a_l < \alpha_l & l =1,2, 3\\
s(i)=1+a_0+ie_0 + \displaystyle\sum_{l=1}^3\left \lfloor\frac{i\omega_l+a_l}{\alpha_l}\right\rfloor \leq 0  &\forall i>0
\end{array}\right.\end{equation}
In this case,
\[\left\{\begin{array}{ll}
0 \leq a_0; \quad 0 \leq a_1 <2; \quad 0\leq a_2<2; \quad 0\leq a_3<n\\
s(i)=1+a_0-2i + \Big \lfloor\frac{i+a_1}{2}\Big\rfloor +  \Big \lfloor\frac{i+a_2}{2}\Big\rfloor +  \left \lfloor\frac{i(n-m)+a_3}{n}\right\rfloor \leq 0  &\forall i>0
\end{array}\right.\]

%If $2m>n>m$, the integral solutions are:
%\[\begin{split}
%(0,0,0,a_3)  \qquad \mathrm{for}~a_3=0,1,\cdots, m-1\\
%(1,0,0,a_3)  \qquad \mathrm{for}~a_3=0,1,\cdots, m-1\\
%(0,0,1,a_3) \qquad \mathrm{for}~a_3=0,1,\cdots, m-1\\
%(0,1,0,a_3) \qquad \mathrm{for}~a_3=0,1,\cdots, m-1
%\end{split}\]

N\'emethi's Theorem~11.5 says that these equations have exactly $|H_1(Y_n)|=4m$ integral solutions. If $(a_0,a_1,a_2,a_3)$ is a solution and $a_0>0$, then $(a_0-1,a_1,a_2,a_3)$ is a solution; if $a_1>0$ also, then $(a_0,a_1-1,a_2,a_3)$ is a solution; etc. 

Check that $(0,0,0,2m-1)$ satisfies $s(i)\leq 0$ if $i>0$: observe that $s(1)= -1, s(2)=0$, and
\begin{multline*}s(i+2)-s(i)\\
=1-2(i+2)+2\left\lfloor\frac{i+2}{2}\right\rfloor+\left\lfloor\frac{(i+2)(n-m)+a_3}{n}\right\rfloor\\
-1+2i-2\left\lfloor\frac{i}{2}\right\rfloor-\left\lfloor\frac{i(n-m)+a_3}{n}\right\rfloor \\ 
\leq -2+\left\lfloor\frac{2(n-m)}{n}\right\rfloor\leq 0.\end{multline*}
However, $(0,0,0,2m)$ is not: $s(2)=1$. 

Similarly, $(0,0,1,m-1)$  and $(0,1,0,m-1)$ are solutions, as $s(1)=0$ and $s(i+1)-s(i)\leq 0$. On the other hand, $(0,0,1,m)$ and $(0,1,0,m)$ have $s(1)=1$. 

Finally, note that $(1,0,0,0)$ is not a solution: $s(2)=\left\lfloor\frac{2(n-m)}{n}\right\rfloor=1$ if $n>2m$.

Therefore, if $n>2m$, the integral solutions are
\begin{equation}\label{eqnvectors}\begin{split}
(0,0,0,a_3)  \qquad \mathrm{for}~a_3=0,1,\cdots, 2m-1\\
(0,0,1,a_3) \qquad \mathrm{for}~a_3=0,1,\cdots, m-1\phantom{2}\\
(0,1,0,a_3) \qquad \mathrm{for}~a_3=0,1,\cdots, m-1\phantom{2}
\end{split}\end{equation}
In particular, the equations for $-Y_n$ and $-Y_{n+m}$ have the same solutions.

\paragraph{4.} Next, each $(a_0,a_1,a_2,a_3)$ induces a
\[-\chi=\sum_{l=0}^3\frac{a_l}{2} +\frac{\varepsilon\tilde{a}}{2}+\frac{\tilde{a}^2}{2e}-\sum_{l=1}^3\sum_{i=1}^{a_l}\left\{\frac{i\omega'_l}{\alpha_l}\right\}\]
where $\tilde{a}=a_0+\sum_{l=1}^3\frac{a_l}{\alpha_l}$ and $\{x\}$ is the fractional part of $x$. 

Consider $(0,0,0,a_3)$, which is a solution for (\ref{eqnSIred}) for both $-Y_{n+m}$ and $-Y_n$. First,
\[\big(\tilde{a}\big)_{n}=\frac{a_3}{n} \qquad (\tilde{a})_{n+m}=\frac{a_3}{n+m}\]
Next, compare $-\chi$ for $-Y_{n+m}$ and $-Y_n$. First recall $\omega_3'$ is chosen so $\omega_3'\omega_3\equiv 1 \bmod \alpha_3$. Considering the invariants for $-Y_n$, we see that $(\omega_3')_n(n-m)\equiv 1\pmod n$, or, equivalently, $(\omega_3')_n(-m)\equiv 1\pmod n$, which means 
\[\left\{\frac{(\omega_3')_n}{n}\right\}=\left\{-\frac{1}{mn}\right\}.\] 
Therefore,
\begin{multline*}-2\big((\chi )_{n+m}-(-\chi)_{n}\big)\\
=\frac{a_3^2+a_3}{n(n+m)}-2\sum_{i=1}^{a_3}\left\{\frac{i(\omega'_3)_{n+m}}{n+m}\right\}+2\sum_{i=1}^{a_3}\left\{\frac{i(\omega'_3)_n}{n}\right\}\\
=\frac{a_3^2+a_3}{n(n+m)}-2\sum_{i=1}^{a_3}\left\{\frac{-i}{(n+m)m}\right\}+2\sum_{i=1}^{a_3}\left\{\frac{-i}{nm}\right\}\end{multline*}
We can combine the fractional parts (since $\{a\}+\{b\}=\{a+b\}$ when none of the three is an integer), and then remove the fractional signs (since $0 < i < n(n+m)$) and simplify:
\[%=\frac{a_3^2+a_3}{n(n+m)}-2\sum_{i=1}^{a_3}\left\{\frac{i}{nm}-\frac{i}{(n+m)m}\right\}
=\frac{a_3^2+a_3}{n(n+m)}-2\sum_{i=1}^{a_3}\left\{\frac{i}{n(n+m)}\right\} = 0\]

%The solution $(1,0,0,a_3)$ for (\ref{eqnSIred}) gives:
%\[\big(\tilde{a}\big)_{n}=\frac{a_3}{n}+1 \qquad (\tilde{a})_{n+m}=\frac{a_3}{n+m}+1\]
%and
%\[-2\big((\chi )_{n+m}-(-\chi)_{n}\big) =\frac{a_3^2+a_3}{n(n+m)}-1-2\sum_{i=1}^{a_3}\frac{i}{n(n+m)}=-1\]

By a similar method, the solution $(0,1,0,a_3)$ gives
\[\big(\tilde{a}\big)_{n}=\frac{a_3}{n} +\frac{1}{2} \qquad (\tilde{a})_{n+m}=\frac{a_3}{n+m}+\frac{1}{2}\]
so that 
\[-2\big((\chi )_{n+m}-(-\chi)_{n}\big) =\frac{a_3^2+a_3}{n(n+m)}-\frac{1}{4}-2\sum_{i=1}^{a_3}\frac{i}{n(n+m)}=-\frac{1}{4}\]

\paragraph{5.} We prove $\min_{i\geq 0} \tau(i)= \tau(0)=0$. The $\tau(i)$ are defined by setting $\tau(0)=0$ and 
\[\tau(i+1)-\tau(i)=1+a_0-ie_0+\sum_{l=1}^3\left\lfloor\frac{-i\omega_l+a_l}{\alpha_l}\right\rfloor\]
when $i>0$. For $(a_0,a_1,a_2,a_3)=(0,0,0,a_3)$ where $0\leq a_3<2m$, 
\begin{multline*}\tau(i+1)-\tau(i)=1+2i+2\left\lfloor -\frac{i}{2}\right\rfloor + \left\lfloor \frac{-i(n-m)+a_3}{n}\right\rfloor\\
 \geq i+\left\lfloor-\frac{i(n-m)}{n}\right\rfloor+\left\lfloor\frac{a_3}{n}\right\rfloor\geq \left\lfloor\frac{a_3}{n}\right\rfloor=0 \quad \forall i\geq 0
\end{multline*}
For $(0,1,0,a_3)$ or $(0,0,1,a_3)$ with $0\leq a_3<m$,
\begin{multline*}\tau(i+1)-\tau(i)=1+2i+\left\lfloor -\frac{i}{2}\right\rfloor + \left\lfloor -\frac{i+1}{2}\right\rfloor +\left\lfloor \frac{-i(n-m)+a_3}{n}\right\rfloor\\
 \geq i+\left\lfloor-\frac{i(n-m)}{n}\right\rfloor+\left\lfloor\frac{a_3}{n}\right\rfloor\geq \left\lfloor\frac{a_3}{n}\right\rfloor=0 \quad \forall i\geq 0
\end{multline*}
which means $\tau(i)$ is increasing and
\[\min_{i\geq 0} \tau(i)= \tau(0)=0.\]

\paragraph{6.} Finally, we calculate 
\[d(-Y_{n+m},\sigma_{n+m})= d(-Y_n,\sigma_n) + \frac{1}{4}\]
for $\sigma_{n+m}$ and $\sigma_n$ the $Spin^c$ structures that correspond to $(0,0,0,a_3)$, and
\[d(-Y_n,\sigma_{n+m})= d(-Y_n,\sigma_n)\]
for $\sigma_{n+m}$ and $\sigma_n$ corresponding to $(0,0,1,a_3)$ or $(0,1,0,a_3)$. Reversing orientation and making a reasonable choice of ordering on $Spin^c(Y_n)$ gives the theorem statement.
\end{proof}

\begin{prop}\label{propdedekind}
If $m>0$ and $\gcd(2m,n)=1$,
\[\textbf{s}(n,n-m)+\textbf{s}(-m,n-m)=0\]
and the Dedekind sum reciprocity formula says:
\[\textbf{s}(a,b)+\textbf{s}(b,a)=\frac{1}{12}\left(\frac{a}{b}+\frac{b}{a}+\frac{1}{ab}\right)-\frac{1}{4}
\]
\end{prop}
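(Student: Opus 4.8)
The plan is to deduce the first identity from two elementary symmetries of the Dedekind sum and to quote the classical reciprocity law for the second. Recall that for coprime integers $a$ and $b>0$ one may write $\textbf{s}(a,b)=\sum_{k=1}^{b-1}((k/b))\,((ka/b))$, where $((x))$ is the sawtooth function, which is odd in $x$ and of period $1$. Consequently $\textbf{s}(a,b)$ depends only on the residue of $a$ modulo $b$, and $\textbf{s}(-a,b)=-\textbf{s}(a,b)$. These two facts are the only inputs needed.

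First I would dispose of the divisibility bookkeeping: from $\gcd(2m,n)=1$ we get $\gcd(m,n)=1$, hence $\gcd(n,n-m)=\gcd(m,n-m)=\gcd(m,n)=1$, so each Dedekind sum appearing is formed from a coprime pair and is well defined. Then, since $n\equiv m\pmod{n-m}$, periodicity in the first slot gives $\textbf{s}(n,n-m)=\textbf{s}(m,n-m)$, while oddness gives $\textbf{s}(-m,n-m)=-\textbf{s}(m,n-m)$; adding these two equations yields $\textbf{s}(n,n-m)+\textbf{s}(-m,n-m)=0$, which is the first claim. For the second claim there is nothing to prove: it is the classical Dedekind reciprocity law (see, e.g., Rademacher--Grosswald), valid for all coprime positive $a,b$, and it is recorded here only because it is invoked repeatedly in the proof of Theorem~\ref{thmdihedrald}.

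There is essentially no hard step. The only points requiring any care are that the symmetries invoked (period $1$ and oddness of $((x))$, hence oddness of $\textbf{s}$ in its first argument) hold with no extra hypotheses, and that the coprimality needed to apply the reciprocity formula where Proposition~\ref{propdedekind} is actually used — namely to the pairs $(n-m,n)$ and $(-m,n-m)$, together with the rewriting $\textbf{s}(n-2m,n-m)=\textbf{s}(-m,n-m)$ — follows from $\gcd(2m,n)=1$ exactly as above. If one wishes to avoid the sawtooth formula entirely, both symmetries can instead be read off from the transformation law of the Dedekind $\eta$-function, but the direct computation from the definition is shorter.
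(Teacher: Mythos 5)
Your proof is correct and rests on the same underlying fact as the paper's: the termwise cancellation of the sawtooth contributions coming from $n\equiv m\pmod{n-m}$ together with oddness of $((x))$. The paper carries this out by combining the two sums and computing $\big\lfloor\tfrac{ni}{n-m}\big\rfloor+\big\lfloor-\tfrac{mi}{n-m}\big\rfloor=i-1$ explicitly, whereas you package the same cancellation as the standard periodicity and oddness symmetries of $\textbf{s}(\cdot,b)$; either version is fine, and your coprimality bookkeeping from $\gcd(2m,n)=1$ is exactly what is needed.
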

\begin{proof}

\[\textbf{s}(p,q)=\displaystyle\sum_{i=1}^{q-1}\left(\left(\frac{i}{q}\right)\right)\left(\left(\frac{pi}{q}\right)\right)\]
where
\[\left(\left(x\right)\right) = \left\{\begin{array}{ll} x-\lfloor x \rfloor - \frac{1}{2} &   x\notin \mathbb{Z}\\0 & x\in \mathbb{Z}\end{array}\right.\]
so
\begin{multline*}\textbf{s}(n,n-m)+\textbf{s}(-m,n-m)\\
=\displaystyle\sum_{i=1}^{n-m-1}\left(\left(\frac{i}{n-m}\right)\right)\left(\left(\left(\frac{ni}{n-m}\right)\right)+\left(\left(-\frac{mi}{n-m}\right)\right)\right)\\
=\displaystyle\sum_{i=1}^{n-m-1}\left(\left(\frac{i}{n-m}\right)\right)\left(i-\left\lfloor\frac{ni}{n-m}\right\rfloor+\left\lfloor-\frac{mi}{n-m}\right\rfloor -1\right)=0\end{multline*}
since $\left\lfloor\frac{ni}{n-m}\right\rfloor+\left\lfloor-\frac{mi}{n-m}\right\rfloor =i-1$ as long as $(n-m) \nmid i$.

The second equality is the Dedekind sum reciprocity formula (see~\cite[Chapter~2]{dedekindsum}).
\end{proof}

\section{The correction terms of $S^3_{4m}(K)$}\label{sectdinvsurg}
Next, we analyze the correction terms of integral $L$-space surgeries.

%need to make this much nicer! can we work on the fact that alex poly is t^g-t^{g-1}?
%work on which ones are the surgeries? use the fact that 2m+1 is? 2m-1? what about negative? are they symmetric around 0?

%Theorem~\ref{lemmadunbounded} provides a recursive formula for $d(Y_n,\sigma)$. In particular, it shows that, for half of the possible $\sigma$, $d(Y_n,\sigma)\rightarrow\infty$ as $n\rightarrow \infty$. However, $L$-spaces like the $Y_n$ can only be knot surgeries under certain circumstances. We prove the following Theorem, which shows that the $d(Y_n,\sigma)$ are, in fact, bounded if $Y_n$ is surgery on a knot.

\begin{proof}[Proof of Theorem~\ref{lemmadbounded}]
Say $4m>0$ and $Y=S^3_{4m}(K)$ is an $L$-space, the Heegaard Floer analogue of a lens space. That is, $\widehat{HF}(Y,\sigma)\cong\widehat{HF}(S^3)$. Ozsv\'ath-Szab\'o developed a performed a careful analysis of knot surgeries, including an analysis of how the gradings of $\widehat{HF}(Y)$ compare to the gradings of $\widehat{HF}(Y_n(K))$ \cite{ozszzsurgery,ozszqsurgery}. We apply their results to this situation. %\cite[Proposition~2.3]{ozszlens}.

By \cite[Corollary 1.4]{ozszqsurgery}, if $K$ admits a positive $L$-space surgery, then $S^3_{p/q}(K)$ is an $L$-space iff $\frac{p}{q} \geq 2 g(K) -1.$ For $p/q=4m$, \[g(K)\leq 2m.\]

Use the identification $Spin^c(Y) \cong \Zed_{4m}$ from \cite[Proposition~4.8]{ozszabsolute}. Then $d(Y,i)=d(Y,4m-i)$, so consider only \mbox{$0\leq i \leq p/2=2m$.} 

Next, $d(S^3_{p/q}(K),i) = d(S^3_{p/q}(U),i) -2 \sum_{j=1}^{\infty} ja_{|\lfloor i/q\rfloor|+j}$ \cite[Theorem 1.2]{ozszqsurgery}, so
\[d(S^3_{4m}(K),i)=d(S^3_{4m}(U),i)-2 \sum_{j=1}^{\infty} ja_{i+j}.\]
Also, $d(S^3_{p/q}(U),i) = -\left( \frac{p q-(2i+1-p-q)^2}{4p q}\right) - d(S^3_{q/r}(U),j)$
where $r \equiv p \bmod q$ and $j \equiv i \bmod q$ \cite[Proposition 4.8]{ozszabsolute}. Therefore,
\[\displaystyle d(S^3_{4m}(U),i)=-\frac{4m-(2i-4m)^2}{16m} = -\frac{1}{4}+\frac{(i-2m)^2}{4m},\]
but we only consider $0\leq i \leq 2m$. By the second derivative test, the minimum occurs at $i=2m$ and the maximum at $i=0$:
\[\displaystyle -\frac{1}{4} \leq d(L_{4m,1},i) \leq m-\frac{1}{4}\]
By Proposition~\ref{propalex} below, as long as $|i|<g(K)$,
\[0 \leq \sum_{j=1}^{\infty} ja_{|i|+j} \leq g(K)-1\leq 2m-1\] % actually,  \leq g(K)-|i|-1\leq 2m-|i|-1
Finally, 
\[\displaystyle -4m+\frac{7}{4} \leq d(S^3_{4m}(K),i)\leq m-\frac{1}{4}\]
\end{proof}

\begin{prop} \label{propalex}
If $K$ is a knot with an $L$-space surgery with genus $g=g(K)>1$ and Alexander polynomial coefficients $\{a_j\}$, and if $|i| < g(K)$, then 
\begin{equation}0 \leq \sum_{j=1}^{\infty} ja_{|i|+j} \leq\max(1, g(K)-|i|-1)\label{eqnalex}\end{equation}
\end{prop}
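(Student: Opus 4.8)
The plan is to rephrase the statement in terms of the \emph{torsion coefficients} $t_i(K):=\sum_{j\ge 1} j\,a_{i+j}$ and to exploit the very restricted shape of the knot Floer homology of an $L$-space knot. Since $a_j=a_{-j}$, the quantity in question depends only on $|i|$, so I may assume $i\ge 0$, and it suffices to prove $0\le t_i\le \max(1,g-i-1)$ for $0\le i<g$. The only input that is not bookkeeping is a structural fact about the Alexander polynomial of an $L$-space knot near its top degree, and that is where I expect all the difficulty to sit.

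First I would collect the standard facts. By Ozsv\'ath--Szab\'o, an $L$-space knot $K$ is fibered of genus $g=\deg\Delta_K$ with leading coefficient $a_g=1$, its nonzero Alexander coefficients alternate in sign, and its torsion coefficients coincide with the nonnegative integers $V_i$ that enter the rational surgery formula used in the proof of Theorem~\ref{lemmadbounded}; in particular $t_i\ge 0$, $t_i=0$ for $i\ge g$, and $t_i-t_{i+1}=\sum_{k>i}a_k\in\{0,1\}$ for every $i\ge 0$. Telescoping the last relation gives $t_i=\sum_{l=i}^{g-1}(t_l-t_{l+1})$, a sum of $g-i$ terms each equal to $0$ or $1$, so $0\le t_i\le g-i$; and evaluating directly, $t_{g-1}=a_g=1$, which settles $i=g-1$ because $\max(1,0)=1$.

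It then remains, for $0\le i\le g-2$, to upgrade $t_i\le g-i$ to $t_i\le g-i-1$, i.e.\ to exclude equality. If $t_i=g-i$, then all $g-i$ of the differences $t_l-t_{l+1}$ with $i\le l\le g-1$ equal $1$; subtracting consecutive ones forces $a_{i+1}=a_{i+2}=\cdots=a_{g-1}=0$, hence in particular $a_{g-1}=0$. Here enters the one nontrivial ingredient: for an $L$-space knot of genus $g\ge 2$ the second-largest exponent appearing in $\Delta_K$ is exactly $g-1$, equivalently $a_{g-1}=-1$, equivalently $\widehat{HFK}(K,g-1)\neq 0$ (the topmost generator of the staircase complex $CFK^{\infty}(K)$, which carries $\widehat{HFK}(K,g)$, is joined to the rest of the complex by a step of length one). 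Granting this, $a_{g-1}=0$ is a contradiction, so $t_i\le g-i-1=\max(1,g-i-1)$ for $i\le g-3$, while for $i=g-2$ we compute directly $t_{g-2}=a_{g-1}+2a_g=1=\max(1,1)$; together with $t_i\ge0$ this finishes the proof. The main obstacle is exactly the assertion $a_{g-1}=-1$: all the rest is elementary manipulation of the sequence $\{t_i\}$, but this step has to be extracted from (or reproved using) the classification of $L$-space knot Alexander polynomials and the staircase description of their knot Floer homology; any self-contained write-up should isolate it as a lemma.
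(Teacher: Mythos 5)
Your argument is correct and is essentially the paper's proof in different packaging: telescoping the differences $t_i - t_{i+1} = \sum_{k>i} a_k \in \{0,1\}$ is equivalent to the paper's direct pairing of consecutive nonzero coefficients, and both arguments rest on exactly the same two external inputs, namely the alternating $\pm 1$ structure of $\Delta_K$ for $L$-space knots and the fact that $a_{g-1}=-1$ when $g\geq 2$ (which the paper likewise isolates and cites rather than reproves). No gaps.
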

\begin{proof}
Recall that the Alexander polynomial may be normalized to \[\Delta_K(T)=a_0+\sum_{j=1}^g a_j(T^j+T^{-j}).\] If $K$ has $L$-space surgeries, then the non-zero $a_i$ are alternating $+1$s and $-1$s where the highest non-zero term is $a_g=+1$ \cite{ozszlens}. Write the non-zero coefficients $\{a_{j_l}\}_{l=1}^k$. Note $k\geq 1$.
%Recall three necessary properties of the Alexander polynomials $\Delta_K(T)=a_0+\sum_{i=1}^g a_i(T^i+T^{-1})$ for knots with $L$-space surgeries:
%\begin{enumerate}
%\item The non-zero $a_i$ are alternating $+1$s and $-1$s $a_g=+1$ \cite{ozszlens}.
%\item $deg(\Delta_K(T))=g$, so the first non-zero coefficient is $a_g=+1$ \cite{?}.
%\item The second non-zero coefficient is $a_{g-1}=-1$ \cite{heddenalexpolyfirsttwoterms}.
%\end{enumerate}

If $k$ is even, $a_{j_{2l}}=+1$ and $a_{j_{2l-1}}=-1$, so
\[\sum_{j=1}^{g} ja_j=\sum_{l=1}^{k} j_la_{j_l} = \sum_{j=1}^{k/2}(j_{2l}-j_{2l-1}) \geq 0.\]
If $k=1$, 
\[\sum_{j=1}^{g} ja_j= ga_g \geq 0.\]
If $k>1$ is odd, $a_{j_{2l}}=-1$ and $a_{j_{2l-1}}=+1$, and
\[\sum_{j=1}^{g} ja_j=\sum_{l=1}^{k} j_la_{j_l} = j_0 + \sum_{l=1}^{(k-1)/2}(j_{2l+1}-j_{2l}) \geq 0.\]
Similarly, if the top term $a_g$ had been negative instead of positive, then $\sum_{j=1}^{g} ja_j \leq 0$. Therefore,
\[\sum_{j=1}^{g} ja_j =ga_g+\sum_{j=1}^{g-1} ja_j\leq g.\]

If $K$ has $L$-space surgeries, it is actually known that the second highest term is $a_{g-1}=-1$ \cite{mysterymatt}, so, since $g(K)>1$,
\[ \sum_{j=1}^{g} ja_{j} = g-(g-1) + \sum_{j=1}^{g-2}ja_j \leq g-1\]
and, as long as $|i|<g-1$,
\[ \sum_{j=1}^{g} ja_{|i|+j} =\sum_{j=1}^{g-|i|} ja_{|i|+j} \leq g-|i|-1\]
and, if $|i|=g-1$,
\[ \sum_{j=1}^{g} ja_{|i|+j} =1\]

%In fact, this inequality did not use the third property of $\Delta_K(T)$; incorporating the third property, 
%\[ \sum_{j=1}^{\infty} ja_{|i|+j} \leq ga_g+(g-1)a_{g-1}+\sum_{j=1}^{g-2} ja_{j}  \leq g-1.\]
\end{proof}

\section{Proof}\label{sectproof}

We finish with a proof of the main result.

\begin{proof}[Proof of Theorem~\ref{thm1b}]
If $Y$ is one of the infinite family of dihedral manifolds $Y_n$, by Theorem~\ref{lemmadunbounded}, there is a sequence of $\sigma_n\in Spin^c(Y_n)$ such that 
\[\lim_{n\rightarrow \infty} \big| d(Y_n,\sigma_n) \big| =\infty.\]
However, by Theorem~\ref{lemmadbounded},
\[-4m \leq d(S^3_{4m}(K);\sigma) \leq m \qquad \forall \sigma \in Spin^c(S^3_{4m}(K))\]
Therefore, for sufficiently large $n$, $Y_n$ is not surgery on a hyperbolic knot.
\end{proof}

\begin{proof}[Proof of Theorem~\ref{thm1}]
Finite surgery on non-hyperbolic knots is classified (see Section~\ref{sectsfssurgery}). In particular, for a fixed $p$, there are finitely many choices of $K$ and $p/q$ so that $S^3_{p/q}(K)$ is elliptic \cite{bleilerhodgson2,boyerzhang,moser}.

Fix $Y$ with finite fundamental group and $H_1(Y)=\mathbb{Z}_p$. By Perelman \cite{perelman}, $Y$ is Seifert fibered. If it is surgery on a hyperbolic knot, then $q=1$ by \cite[Theorems~1.1,~1.2]{boyerzhangii}. By Seifert (Theorem~\ref{thmseifert}), if $p$ is not divisible by 4, there is at most one such $Y$, and it is icosahedral, octahedral, or tetrahedral; for each of these, it is knot surgery by Proposition~\ref{thmtorus}. If $p$ is divisible by 4, $Y$ is one of the infinite family of dihedral manifolds $Y_n$. 
\end{proof}

\section{Questions}\label{sectques}

What is the best bound on $n$ that ensures $Y_n$ is not surgery on a knot? The bounds in Theorem~\ref{lemmadbounded} hold for $|n|\geq 16m$, but the largest $n$ for which the author knows $Y_{\pm n}$ is a surgery is $n=2m+1$ (see Proposition~\ref{thmtorus} with $T_{2m+1,2}$. Is this bound sharp? If not, can it be improved by applying additional information about $K$ to narrow down the possible correction terms?

Do any of Dean's knots give $Y_n$ for $|n| > 2m+1$?

Of the remaining manifolds with $|n|\leq 16m$, which are knot surgeries? How good are the correction terms at obstructing these cases from being surgery on a knot? In \cite{doigfinite}, a more painstaking examination of the correction terms for all $|n|\leq 32$ obstructed almost all $Y_n$ from being surgery on a knot.

How can the correction terms be used to study Dean's list and test whether it is comprehensive for finite surgeries?

%Can these results be (easily) extended to classify the finite surgeries on arborescent, hyperbolic knots? As Mattman observes in~\cite{}, these can only arise from pretzel knots or certain other 3-tangle Montesinos knots.

\section{Acknowledgements}

This work would never have been possible without the constant interaction and mathematical stimulation of all my colleagues from Indiana University, Bloomington; Princeton University; and the many seminars and conferences I have been privileged to attend.

\bibliographystyle{plain}
\bibliography{biblio}
\end{document}